\newcommand{\ip}[2]{\bigl\langle #1,\, #2\bigr\rangle}	
\newcommand{\cm}{\pmb{\nu}}
\newtheorem{theorem}{Theorem}[section]
\newtheorem{lemma}[theorem]{Lemma}
\newtheorem{proposition}[theorem]{Proposition}
\theoremstyle{definition}
\newtheorem{definition}[theorem]{Definition}
\newtheorem{remark}[theorem]{Remark}
\numberwithin{equation}{section}
\def\Xint#1{\mathchoice
{\XXint\displaystyle\textstyle{#1}} 
{\XXint\textstyle\scriptstyle{#1}} 
{\XXint\scriptstyle\scriptscriptstyle{#1}} 
{\XXint\scriptscriptstyle\scriptscriptstyle{#1}} 
\!\int}
\def\XXint#1#2#3{{\setbox0=\hbox{$#1{#2#3}{\int}$ }
\vcenter{\hbox{$#2#3$ }}\kern-.56\wd0}}
\def\intavg{\Xint-}
\renewcommand{\geq}{\geqslant}
\renewcommand{\leq}{\leqslant}
\newcommand{\define}[1]{\emph{#1}}
\newcommand{\Prob}{\EuScript{P}}
\renewcommand{\epsilon}{\varepsilon}
\newcommand{\eps} {\varepsilon}
\renewcommand{\phi}{\varphi}
\newcommand{\R}{\mathbb{R}}
\newcommand{\N}{\mathbb{N}}
\newcommand{\Lp}{\EuScript{L}}
\newcommand{\phase}{\mathbf{U}}
\newcommand{\timedom}{{\mathcal{T}}}	
\begin{document}

\title{Statistical solutions and Onsager's conjecture}
\author{U. S. Fjordholm\thanks{Department of Mathematical Sciences, Norwegian University of Science and Technology, Trondheim, N-7491, Norway.} \and E. Wiedemann\thanks{
Institut f\"ur Angewandte Mathematik, Leibniz Universit\"at Hannover, Welfengarten 1, 30167 Hannover, Germany.}}

\maketitle
\begin{abstract}
We prove a version of Onsager's conjecture on the conservation of energy for the incompressible Euler equations in the context of statistical solutions, as introduced recently by Fjordholm et al. As a byproduct, we also obtain a new proof for the conservative direction of Onsager's conjecture for weak solutions.
\end{abstract}

\begin{center}
{\textit{Dedicated to Edriss S. Titi on the occasion of his 60th birthday.}}
\end{center}

\section{Introduction}
We consider the $d$-dimensional incompressible Euler equations: Find a function $v = (v^1,\dots,v^d) : \R_+\times D \to \R^d$ and a function $p : \R_+\times D\to\R$ such that
\begin{equation}\label{eq:euler}
\begin{split}
\partial_t v + \sum_k\partial_{x^k} (vv^k) + \nabla p = 0 &\qquad x\in D,\ t>0 \\
\nabla\cdot v = 0 &\qquad x\in D,\ t>0 \\
v(0,x) = v_0(x) &\qquad x\in D.
\end{split}
\end{equation}
Here and below, the summation limits, when not specified, are always from $k=1$ to $k=d$. The initial data $v_0$ is assumed to lie in $L^2(D)$. The spatial parameter $x$ takes values in a set $D$, which we will take as either $\R^d$ or the ($d$-dimensional) torus $\mathbb{T}^d$\footnote{On domains with boundaries, the situation is more subtle: While one can show the \emph{local} version of the energy equality with almost no further effort, it is no longer clear whether this implies also the \emph{global} conservation of energy under the usual Besov regularity assumption.}. The temporal domain is $[0,T]$ for some $T>0$. By a \textit{solution of the Euler equations} we will mean a weak solution of \eqref{eq:euler}, i.e.\ a function $v \in L^2_{\textit{loc}}\big([0,T]\times D;\R^d\big)$ such that
\begin{equation}\label{eq:weakdef}
\int_{\R_+}\int_D v\partial_t\phi + \sum_k vv^k \partial_{x^k} \phi + p\nabla \phi\,dxdt + \int_D v_0(x)\phi(0,x)\,dx = 0
\end{equation}
for all $\phi \in C_c^\infty([0,T)\times\R^d)$, as well as satisfying the divergence free condition in the sense of distributions. 

Assume that $v$ is a smooth solution of \eqref{eq:euler}. Multiplying the first equation of \eqref{eq:euler} by $v$ gives
\begin{equation}\label{eq:localenergycons}
\partial_t \frac{|v|^2}{2} + \sum_k \partial_{x^k} \left(v_k\left(\frac{|v|^2}{2} + p\right)\right) = 0.
\end{equation}
Integrating this \textit{local energy identity} over $x\in D$ and $t\in[0,T]$, we obtain the \textit{global energy identity}
\begin{equation}\label{eq:globalenergycons}
\int_D\frac{|v(T)|^2}{2}\,dx = \int_D \frac{|v_0|^2}{2}\,dx.
\end{equation}
In 1949 Lars Onsager conjectured \cite{Ons49} that if $v$ is H\"older continuous with exponent greater than $1/3$ then the above calculations can be made rigorous:
\begin{quote}
In fact it is possible to show that the velocity field in such ``ideal'' turbulence cannot obey any \textsc{Lipschitz} condition of the form (...) for any order $n$ greater than $1/3$; otherwise the energy is conserved.\footnote{As a historical sidenote, Onsager wrote down a formal proof of his own conjecture---close in spirit to the later proof by Duchon and Robert---which he never published; see \cite{EyiSre06}. Thus, his rather cryptic ``it is possible to show'' should in fact be interpreted literally rather than hypothetically.}
\end{quote}
As was shown in 1994 by Constantin, E and Titi \cite{CET94}, the conjecture {is indeed} true when $\frac{1}{3}$-H\"older continuity is replaced by $B_3^{\alpha,\infty}$ (Besov) regularity for any $\alpha>\frac{1}{3}$. The proof uses a regularization of \eqref{eq:euler} together with some basic estimates in Besov spaces. Independently, Eyink \cite{Eyi94} proved the conjecture in Fourier space under a stronger assumption. Duchon and Robert \cite{DR00} employed the regularization technique from \cite{CET94} to quantify the \emph{anomalous energy dissipation} $\mathcal{E}(u)$ of an arbitrary solution $u$---the amount by which equality in \eqref{eq:localenergycons} fails to hold. The sharp exponent $\alpha=\frac{1}{3}$ was shown to suffice for energy conservation at the cost of a slightly stronger summability assumption for the Besov space in \cite{CCFS08}, see also \cite{RRS16}. More recently, related results were given for density-dependent Euler models in \cite{LS16, FGSW17, DE17}. On the other hand, there is the question whether energy \emph{can} be dissipated for any H\"older or Besov regularity below the exponent $\frac{1}{3}$. This difficult problem was solved only very recently \cite{Is17, BDSV17}.

In \cite{FLM17} the authors developed the concept of \emph{statistical solutions} of hyperbolic conservation laws, which are solutions of an evolution equation which incorporate uncertainty in the solution. The uncertainty can be due to errors in the initial or boundary data or---as is more relevant in the present setting---modeling in the context of turbulent flows. Statistical solutions were formulated as maps from time $t$ to probability measures $\mu_t$ on $L^2(D)$. The authors showed that any probability measure $\mu\in\Prob(L^2(D))$ can be described equivalently as a \emph{correlation measure}, a hierarchy $\cm=(\nu^1,\nu^2,\dots)$ in which each element $\nu^k$ gives the joint probability distribution $\nu^k_{x_1,\dots,x_k}$ of the values $v(x_1),\dots,v(x_k)$ for any choice of spatial points $x_1,\dots,x_k\in D$. The evolution equation for $\mu_t$ is most naturally described in terms of its corresponding correlation measure, yielding an infinite hierarchy of evolution equations. In particular, the equation for the one-point distribution $\nu^1_x$ coincides with {DiPerna's definition }of \emph{measure-valued solutions}, see~\cite{DiP85,DM87,SW12}.

The purpose of the present paper is twofold. First, in Section \ref{sec:onsagerweaksoln} we provide an alternative, rather elementary proof of (local) energy conservation of solutions of the Euler equation which is close in spirit to the regularization technique in \cite{DR00}, but is also reminiscent of Kruzhkov's \emph{doubling of variables} technique \cite{Kr70}. Second, in Section \ref{sec:statsoln} we formulate the concept of statistical solutions of the incompressible Euler equations \eqref{eq:euler}, and we show that the proof of energy conservation is readily generalized to statistical solutions. {We end by comparing the concepts of Besov regularity of functions and of correlation measures in Section \ref{sec:regularity}.}

Our main result (Theorem \ref{thm:statsolnenergycons}) says that an uncertain fluid flow---realized as a statistical solution of \eqref{eq:euler}---conserves energy provided it has more than $\frac{1}{3}$ of a derivative. The ease by which statements about regularity can be formulated using correlation measures indicates to us that statistical solutions---and not measure-valued solutions---are the right notion of solutions for uncertain (or \emph{unsteady}) fluid flows. {We refer the interested reader to the upcoming paper \cite{FMW17} where we discuss statistical solutions of the Navier--Stokes and Euler equations and the connection to Kolmogorov's theory of turbulence (cf.~also Remark \ref{rem:kolmogorov}).}

\section{Onsager's conjecture}\label{sec:onsagerweaksoln}
We first write the Euler equation in component form:
\begin{equation}\label{eq:eulercomp}	
\partial_t v^i + \sum_k\partial_{x^k} (v^iv^k) + \partial_{x^i}p = 0.
\end{equation}
It is straightforward to see that if $(v,p)$ is any smooth solution of the above equation, then the function $(t,x,y) \mapsto v^i(t,x)v^j(t,y)$ satisfies
\begin{equation}\label{eq:euler2ndmoment}
\begin{split}
\partial_t\left(v^i(x)v^j(y)\right) + \sum_k \partial_{x^k}\left(v^i(x)v^k(x)v^j(y)\right) + \sum_k\partial_{y^k}\left (v^i(x)v^k(y)v^j(y)\right) \\
~ + \partial_{x^i}\left(p(x)v^j(y)\right) + \partial_{y^j} \left(v^i(x)p(y)\right) = 0
\end{split}
\end{equation}
(where we suppress the dependence on $t$). The proof of this claim consists of evaluating \eqref{eq:eulercomp} at $x$ and at $y$, multiplying the former by $v^j(y)$ and the latter by $v^i(x)$, and then summing the two. With only a bit more work, one shows that if $v$ is any \textit{weak} solution of the Euler equation then \eqref{eq:euler2ndmoment} is satisfied in the distributional sense.

\begin{lemma}
If $v : (0,T)\times D\to\R^d$ is a weak solution of the Euler equation then
\begin{equation}\label{eq:weakeuler2ndmoment}
\begin{split}
\int_{0}^T\int_D\int_Dv^i(x)v^j(y)\partial_t\phi + \sum_k v^i(x)v^k(x)v^j(y)\partial_{x^k}\phi + \sum_k v^i(x)v^k(y)v^j(y) \partial_{y^k} \phi \\
~ + p(x)v^j(y)\partial_{x^i}\phi + v^i(x)p(y) \partial_{y^j} \phi\,dxdydt + \int_D\int_Dv_0^i(x)v_0^j(y)\phi(0,x,y)\,dxdy = 0
\end{split}
\end{equation}
for every test function $\phi = \phi(t,x,y)\in C_c^\infty([0,T)\times D^2)$ and every $i,j=1,\dots,d$.
\end{lemma}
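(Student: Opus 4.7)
The argument follows the classical technique of spatial mollification. Let $\eta_\eps$ be a smooth mollifier on $\R^d$, and write $f_\eps$ for the mollification of $f$ in the spatial variable only. Mollifying the component form \eqref{eq:eulercomp} in $x$—equivalently, testing \eqref{eq:weakdef} against $\eta_\eps(x-\cdot)\psi(t)$ for each $x$—yields
\begin{equation*}
\partial_t v_\eps^i + \sum_k \partial_{x^k}(v^i v^k)_\eps + \partial_{x^i} p_\eps = 0
\end{equation*}
in the sense of distributions in $t$, for each $x$. Because every spatial term is $C^\infty$ in $x$ and $\partial_t v_\eps^i \in L^1_{loc}(t)$ for each $x$, the map $t\mapsto v_\eps^i(t,x)$ is absolutely continuous.

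The next step is to evaluate this identity at $x$, multiply by $v_\eps^j(t,y)$ (which is $x$-independent and therefore commutes with $\partial_{x^k}$), add the analogous identity at $y$ multiplied by $v_\eps^i(t,x)$, and finally apply the $t$-product rule. This produces the pointwise-in-$(x,y)$ identity
\begin{align*}
\partial_t\bigl(v_\eps^i(x)\, v_\eps^j(y)\bigr) &+ \sum_k \partial_{x^k}\bigl[(v^iv^k)_\eps(x)\, v_\eps^j(y)\bigr] + \sum_k \partial_{y^k}\bigl[v_\eps^i(x)\, (v^jv^k)_\eps(y)\bigr] \\
&+ \partial_{x^i}\bigl[p_\eps(x)\, v_\eps^j(y)\bigr] + \partial_{y^j}\bigl[v_\eps^i(x)\, p_\eps(y)\bigr] = 0.
\end{align*}
Testing this against $\phi\in C_c^\infty([0,T)\times D^2)$ and integrating by parts produces the $\eps$-regularised version of \eqref{eq:weakeuler2ndmoment}, with $(v^iv^k)_\eps(x) v_\eps^j(y)$ in place of $v^i(x)v^k(x)v^j(y)$, the symmetric substitution in the $y$-term, and $v_{0,\eps}^i v_{0,\eps}^j$ in the initial-data integral.

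It remains to let $\eps\to 0$. Standard mollifier convergence yields $(v^iv^k)_\eps\to v^iv^k$ in $L^1_{loc}(t,x)$, $v_\eps\to v$ in $L^2_{loc}$, $p_\eps\to p$ in $L^1_{loc}$, and $v_{0,\eps}\to v_0$ in $L^2_{loc}(D)$. Since $\phi$ is compactly supported and every nonlinear integrand factorises as a function of $(t,x)$ times a function of $(t,y)$, Fubini decouples the $x$- and $y$-integrations, and each term converges to its unmollified counterpart. The main obstacle—and the only point demanding real care—is that the triple products such as $(v^iv^k)_\eps(t,x)\,v_\eps^j(t,y)$ must be integrable and must converge in $L^1_{loc}(t,x,y)$; this requires slightly more than $v\in L^2_{loc}$, for instance $v\in L^3_{loc}$, which is implicit in the very statement of \eqref{eq:weakeuler2ndmoment} and is ensured by any finite-energy assumption on a bounded spatial domain.
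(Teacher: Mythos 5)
Your mollification argument is correct and is essentially the rigorous version of the argument the paper only sketches: the lemma is stated there without proof, the text merely noting that the smooth computation (evaluate \eqref{eq:eulercomp} at $x$ and at $y$, cross-multiply by $v^j(y)$ and $v^i(x)$, and add) extends ``with only a bit more work.'' Spatial mollification is exactly the right device to supply that work, since it yields the time regularity ($v^i_\eps(\cdot,x)\in W^{1,1}_{loc}$ in $t$, hence absolutely continuous after modification on a null set) needed to justify the product rule in $t$, after which testing against $\phi(t,x,y)$ and letting $\eps\to0$ is routine. Two small points deserve explicit care: to recover the initial-data term in \eqref{eq:weakeuler2ndmoment} you should test \eqref{eq:weakdef} with $\eta_\eps(x-\cdot)\psi(t)$ for $\psi\in C_c^\infty([0,T))$ with $\psi(0)\neq 0$, which shows that the absolutely continuous representative of $v^i_\eps(\cdot,x)$ attains the value $v^i_{0,\eps}(x)$ at $t=0$, so that the boundary term survives the integration by parts in time; and your integrability caveat is accurate---the triple products are only locally integrable in $(t,x,y)$ under an assumption such as $v\in L^\infty_t L^2_x\cap L^3_{loc}$ and $p\in L^{3/2}_{loc}$, which is precisely the setting of the theorem in which the lemma is subsequently applied.
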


If the local energy inequality \eqref{eq:localenergycons} is to hold for a weak solution, then we need in addition $v\in L^3_{loc}\big([0,T)\times D;\R^d\big)$ and $p\in L_{loc}^{3/2}\big([0,T)\times D;\R^d\big)$ for the equality to make sense distributionally. The Besov regularity of $v$ required to show the equality entails in particular $v\in L^3$, and this in turn implies $p\in L^{3/2}$. Indeed, taking the divergence of the momentum equation in \eqref{eq:euler} we obtain
\begin{equation}
-\Delta p=\sum_{k,l}\partial^2_{x^k,x^l}(v^lv^k),
\end{equation}   
and thus $v\in L^3$ implies $p\in L^{3/2}$ by standard elliptic theory.

\begin{theorem}
Let $v \in L^\infty\big((0,T); L^2\big(D;\R^d\big)\big)\cap L^3((0,T)\times D;\R^d)$ be a weak solution of the Euler equations, and accordingly $p\in  L^{3/2}((0,T)\times D;\R^d)$. Assume that
\begin{equation}\label{eq:besovregular}
\liminf_{\eps\to0}\frac{1}{\eps}\int_0^T\int_D\intavg_{B_\eps(x)} \big|v(x)-v(y)\big|^3\,dydxdt = 0.
\end{equation}
Then $(v, p)$ satisfies the local energy identity \eqref{eq:localenergycons}.
\end{theorem}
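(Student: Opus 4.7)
The plan is to combine the two-point equation \eqref{eq:weakeuler2ndmoment} from the previous lemma with a \emph{doubling of variables} test function that collapses $y$ onto $x$. Concretely, I would set $i=j$, sum over $i$, and insert
\[\phi(t,x,y) = \psi(t,x)\,\rho_\epsilon(x-y),\]
where $\psi \in C_c^\infty((0,T)\times D)$ is arbitrary and $\{\rho_\epsilon\}$ is a standard even compactly supported mollifier on $\R^d$. Since $\partial_{x^k}\phi = (\partial_{x^k}\psi)\rho_\epsilon + \psi\,\partial_k\rho_\epsilon$ and $\partial_{y^k}\phi = -\psi\,\partial_k\rho_\epsilon$, the identity splits naturally into ``main'' pieces (where $\rho_\epsilon$ is undifferentiated) and ``remainder'' pieces (where $\partial_k\rho_\epsilon$ appears). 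The goal is to pass to the limit $\epsilon\to 0$ in each piece; the final outcome, up to an overall factor of two, will be the distributional form of \eqref{eq:localenergycons}.

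The main pieces converge by standard mollification arguments using $v\in L^3$ and $p\in L^{3/2}$: the time-derivative term yields $\int|v|^2\partial_t\psi$, the convective term yields $\int v^k|v|^2\partial_{x^k}\psi$, and the pressure term yields $\int v^k p\,\partial_{x^k}\psi$.

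The heart of the argument is the transport remainder
\[R_T := \sum_k \int (v(x)\cdot v(y))\,(v^k(x)-v^k(y))\,\psi(t,x)\,\partial_k\rho_\epsilon(x-y)\,dxdydt.\]
I would write $v(x)\cdot v(y) = \tfrac12(|v(x)|^2+|v(y)|^2) - \tfrac12|v(x)-v(y)|^2$ and correspondingly decompose $R_T = R_T^{\mathrm{sym}} - R_T^{\mathrm{asym}}$. In $R_T^{\mathrm{sym}}$, unfolding gives four terms: two vanish because $\int \partial_k\rho_\epsilon(x-y)\,dy=0$, while the other two, after shifting $\partial_k\rho_\epsilon$ onto $\psi$ by integration by parts (and dropping an extra term thanks to $\nabla\cdot v = 0$), converge to equal and opposite multiples of $\int|v|^2 v^k\partial_{x^k}\psi$ that cancel exactly. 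The anomalous piece
\[R_T^{\mathrm{asym}} = \tfrac12 \sum_k \int |v(x)-v(y)|^2\,(v^k(x)-v^k(y))\,\psi\,\partial_k\rho_\epsilon\]
is Onsager's obstruction to energy conservation. Using $|\partial_k\rho_\epsilon(x-y)|\lesssim \epsilon^{-d-1}\ind_{B_\epsilon}(x-y)$, it is controlled by
\[|R_T^{\mathrm{asym}}|\leq \frac{C\|\psi\|_\infty}{\epsilon}\int_0^T\!\!\int_D \intavg_{B_\epsilon(x)}|v(x)-v(y)|^3\,dydxdt,\]
whose liminf vanishes by assumption \eqref{eq:besovregular}. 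Thus $R_T^{\mathrm{asym}}\to 0$ along a suitable subsequence. This is the only point where the Besov hypothesis enters, and the main technical obstacle of the proof; everything else is algebra and integration by parts.

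Finally, the pressure remainder
\[R_P := \sum_i \int [p(x)v^i(y) - v^i(x)p(y)]\,\psi(t,x)\,\partial_i\rho_\epsilon(x-y)\,dxdydt\]
also splits into two summands. The first, $\sum_i\int p(x)v^i(y)\psi\,\partial_i\rho_\epsilon$, vanishes identically: for each fixed $(t,x)$ the map $y\mapsto \psi(t,x)\rho_\epsilon(x-y)$ is an admissible test function in the distributional divergence-free condition on $v$. The second, after one integration by parts in $x$ (using $\partial_{x^i}v^i = 0$ once more), reduces to $\int v^i(x)\partial_{x^i}\psi(t,x)\,p(y)\rho_\epsilon(x-y)\,dxdydt$, which converges to $\int v^i p\,\partial_{x^i}\psi$. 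Summing all contributions gives $\int|v|^2\partial_t\psi + \int v^k|v|^2\partial_{x^k}\psi + 2\int v^kp\,\partial_{x^k}\psi = 0$ along the chosen subsequence; dividing by two yields exactly the distributional form of the local energy identity \eqref{eq:localenergycons}.
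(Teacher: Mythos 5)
Your proposal is correct and takes essentially the same route as the paper's proof: the same doubled test function $\psi(t,x)\rho_\eps(x-y)$ after setting $j=i$ and summing, the same polarization of $v(x)\cdot v(y)$ isolating the cubic-difference remainder controlled by \eqref{eq:besovregular} along a subsequence, and the same use of the divergence constraint for the symmetric transport and pressure remainders. One small bookkeeping quibble in $R_T^{\mathrm{sym}}$: of the two terms you say die because $\int \nabla\rho_\eps(x-y)\,dy=0$, the one containing $v^k(y)$ actually vanishes because mollification preserves the divergence-free condition, and the remaining pair cancels only in the limit $\eps\to0$ rather than identically --- neither point affects the argument.
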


\begin{proof}
\newcommand{\term}{\mathcal{A}}
Set $j=i$ in \eqref{eq:weakeuler2ndmoment} and sum over $i$. We can then write the resulting identity as
\begin{equation}\label{eq:weakeuler2ndmoment_2}
\begin{split}
\int_{0}^T\int_D\int_Dv(x)\cdot v(y)\partial_t\phi + v(x)\cdot v(y)\left(v(x)\cdot \nabla_x\phi + v(y)\cdot \nabla_y \phi\right) + p(x)v(y)\cdot\nabla_x\phi \\ + p(y)v(x)\cdot\nabla_y\phi\,dxdydt + \int_D\int_Dv_0(x)\cdot v_0(y)\phi(0,x,y)\,dxdy = 0.
\end{split}
\end{equation}
Fix a number $\eps>0$. We choose now the test function $\phi(t,x,y) = \rho_\eps(x-y)\psi\left(t,x\right)$, where  $\rho_\eps(z) = \eps^{-d}\rho(\eps^{-1}z)$ for a nonnegative, rotationally symmetric mollifier $\rho\in C_c^\infty(D)$ with unit mass and support in $B_0(1)$ (the unit ball in $\R^d$) and $\psi\in C_c^\infty((0,T)\times D)$. Then
\[
\partial_t\phi = \rho_\eps\partial_t\psi, \qquad 
\nabla_x \phi = \psi\nabla\rho_\eps+\rho_\eps\nabla\psi, \qquad 
\nabla_y \phi = -\psi\nabla\rho_\eps.
\]
Continuing from \eqref{eq:weakeuler2ndmoment_2}, we now have
\begin{equation}\label{eq:weakeuler2ndmoment_3}
\begin{split}
\int_{0}^T\int_D\int_Dv(x)\cdot v(y)\rho_\eps\partial_t\psi + v(x)\cdot v(y)\bigl(v(x) - v(y)\bigr)\cdot\nabla\rho_\eps\psi \\
+  v(x)\cdot v(y) v(x)\cdot\nabla\psi\rho_\eps + \big(p(x)v(y)- v(x)p(y)\big)\cdot\nabla\rho_\eps\psi\\
+ p(x)v(y)\cdot\nabla\psi\rho_\eps\,dxdydt 
\end{split}
\end{equation}
Making the change of variables $z = x-y$ gives
\begin{equation}\label{eq:weakeuler2ndmoment_4}
\begin{split}
\int_{0}^T  \int_D\int_D\partial_t\psi(t,x)\rho_\eps(z)v(x)\cdot v(x-z)\,dxdzdt \\
+ \int_{0}^T  \int_D\int_D\psi(t,x)\nabla\rho_\eps(z)\cdot\bigl(v(x) - v(x-z)\bigr) \bigl(v(x)\cdot v(x-z)\bigr)\,dxdzdt \\
+ \int_{0}^T  \int_D\int_D\nabla\psi(t,x)\rho_\eps(z)\cdot v(x) \bigl(v(x)\cdot v(x-z)\bigr)\,dxdzdt \\
+ \int_{0}^T\int_D\int_D\psi(t,x)\nabla\rho_\eps(z)\cdot (p(x)v(x-z)- v(x)p(x-z))\,dxdzdt \\
+ \int_{0}^T\int_D\int_D\nabla\psi(t,x)\rho_\eps(z)\cdot v(x-z)p(x)\,dxdzdt = 0.
\end{split}
\end{equation}
Decompose the above into a sum of five terms $\term_1+\dots+\term_5$. By applying the Lebesgue differentiation theorem, it is easy to see that
\begin{align*}
\term_1 &\to \int_{0}^T\int_D |v(t,x)|^2 \partial_t \psi(t,x)\,dxdt, \\
\term_3+\term_5 &\to \int_{0}^T\int_D \big(|v(t,x)|^2+p(t,x)\big)v(t,x)\cdot\nabla \psi(t,x)\,dxdt
\end{align*}
as $\eps\to0$. For $\term_2$ we can write $\term_2 = \term_{2,1} + \term_{2,2}$, where
\begin{align*}
\term_{2,1} &= -\frac{1}{2}\int_{0}^T \int_D\int_D\psi(t,x)\nabla\rho_\eps(z)\cdot\bigl(v(x) - v(x-z)\bigr) \bigl|v(x)- v(x-z)\bigr|^2\,dxdzdt, \\
\term_{2,2} &= \frac{1}{2}\int_{0}^T \int_D\int_D\psi(t,x)\nabla\rho_\eps(z)\cdot\bigl(v(x) - v(x-z)\bigr) \left(|v(x)|^2+ |v(x-z)|^2\right)\,dxdzdt.
\end{align*}
The first term can be bounded by
\begin{align*}
\frac{\|\psi\|_\infty}{2} \int_0^T\int_D\int_D |\nabla\rho_\eps(z)|\big|v(x)-v(x-z)\big|^3\ dzdxdt \\
\leq C\frac{1}{\eps} \int_0^T\int_D\intavg_{B_\eps(0)}\big|v(x)-v(x-z)\big|^3\ dzdxdt
\end{align*}
the inequality following from the fact that $\|\nabla\rho_\eps\|_{L^1(D)} \leq C\eps^{-1}$. By the Besov regularity assumption \eqref{eq:besovregular}, the above vanishes along a subsequence $\eps'\to0$. In the second term $\term_{2,2}$ we make the change of variables $x \mapsto x+z$ in the term $v(x-z)$ and then the change of variables $z \mapsto -z$ to obtain
\[
\term_{2,2} = \frac{1}{2}\int_{0}^T \int_D\int_D\nabla\rho_\eps(z)\cdot|v(x)|^2\bigl(v(x) - v(x-z)\bigr)\big(\psi(x)+\psi(x-z)\big)\,dxdzdt.
\]
Writing $\psi\nabla\rho_\eps= \nabla(\psi\rho_\eps)-\rho_\eps\nabla\psi$ and using the divergence constraint, we obtain
\[
\term_{2,2} = \frac{1}{2}\int_{0}^T \int_D\int_D\rho_\eps(z)|v(x)|^2\nabla\psi(t,x-z)\cdot\bigl(v(x) - v(x-z)\bigr)\,dxdzdt.
\]
The Lebesgue differentiation theorem now implies that $\term_{2,2}\to0$ as $\eps\to0$.

For $\term_4$, the change of variables $x\mapsto x+z$ in the second term gives
\[
\term_4 = \int_{0}^T\int_D\int_D p(x)\nabla\rho_\eps(z)\cdot \big(\psi(x)v(x-z)-\psi(x+z)v(x+z)\big)\,dxdzdt.
\]
The divergence constraint implies that the first term is zero, while the second term gives
\[
\term_4 = \int_{0}^T\int_D\int_D p(x)\rho_\eps(z)\nabla\psi(x+z)\cdot v(x+z)\big)\,dxdzdt
\]
which converges to $\int_{0}^T\int_D p(x)\nabla\psi\cdot v\,dxdt$. Summing up all the terms, we conclude that in the limit $\eps\to0$ we obtain the local energy identity \eqref{eq:localenergycons} in distributional form.
\end{proof}

\section{Statistical solutions of the Euler equations}\label{sec:statsoln}
In this section we prove that statistical solutions of Euler's equation are energy conservative under a Besov-type regularity assumption. In Section \ref{sec:corrmeas} we introduce the necessary technical machinery, in Section \ref{sec:statsolndef} we define statistical solutions of Euler's equation and in Section \ref{sec:onsagerstatsoln} we carry out the proof of energy conservation. The proof closely follows the proof of energy conservation for weak solutions in Section \ref{sec:onsagerweaksoln}. In particular, there are direct analogues of the weak formulation(s), the divergence constraint, the Besov regularity assumption, and the Lebesgue differentiation theorem.

\subsection{Correlation measures}\label{sec:corrmeas}
\begin{definition}
Let $d,N\in\N$, $q\in[1,\infty)$ and let $D\subset\R^d$ be an open set (the ``space domain'') and denote $\phase = \R^N$ (``phase space''). A \textit{correlation measure from $D$ to $\phase$} is a collection $\cm = (\nu^1,\nu^2,\dots)$ of maps satisfying:
\begin{enumerate}[label=\it(\roman*)]
\item $\nu^k$ is a Young measure from $D^k$ to $\phase^k$.
\item \textit{Symmetry:} if $\sigma$ is a permutation of $\{1,\dots,k\}$ and $f\in C_0(\phase^k)$ then $\ip{\nu^k_{\sigma(x)}}{f(\sigma(\xi))} = \ip{\nu^k_{x}}{f(\xi)}$ for a.e.\ $x\in D^k$. 
\item \textit{Consistency:} If $f\in C_b(\phase^k)$ is of the form $f(\xi_1,\dots,\xi_k) = g(\xi_1,\dots,\xi_{k-1})$ for some $g\in C_0(\phase^{k-1})$, then $\ip{\nu^k_{x_1,\dots,x_k}}{f} = \ip{\nu^{k-1}_{x_1,\dots,x_{k-1}}}{g}$ for almost every $(x_1,\dots,x_k)\in D^k$.
\item \textit{$L^q$ integrability:} 
\begin{equation}\label{eq:corrlqbound_stationary}
\int_D \ip{\nu^1_{x}}{|\xi|^q}\,dx < \infty.
\end{equation}
\item\textit{Diagonal continuity (DC):} $\lim_{\eps\to0}d_\eps^q(\nu^2) = 0$, where
\begin{equation}\label{eq:defd_eps}
d_\eps^q(\nu^2) := \left(\int_D \intavg_{B_\eps(x)} \ip{\nu^2_{x,y}}{|\xi_1-\xi_2|^q}\,dydx\right)^{1/q}.
\end{equation}
\end{enumerate}
We denote the set of all correlation measures by $\Lp^q(D,\phase)$.
\end{definition}

\begin{remark}\label{rem:depsbounded}
The ``modulus of continuity'' $d_\eps^q(\nu^2)$ is bounded irrespective of $\eps>0$, due to the $L^q$ bound. Indeed,
\begin{align*}
d_\eps^q(\nu^2) &= \left(\int_D \intavg_{B_\eps(x)} \ip{\nu^2_{x,y}}{|\xi_1-\xi_2|^q}\,dydx\right)^{1/q} \\
&\leq \left(\int_D \intavg_{B_\eps(x)} \ip{\nu^2_{x,y}}{|\xi_1|^q}\,dydx\right)^{1/q} + \left(\int_D \intavg_{B_\eps(x)}\ip{\nu^2_{x,y}}{|\xi_2|^q}\,dydx\right)^{1/q} \\
&= 2\left(\int_D \ip{\nu^1_x}{|\xi|^q}\,dx\right)^{1/q} < \infty,
\end{align*}
where we have used Minkowski's inequality and then the consistency requirement.
\end{remark}

\begin{remark}
An example of a correlation measure is $\nu^k_{x_1,\dots,x_k} = \delta_{u(x_1)}\otimes\cdots\otimes\delta_{u(x_k)}$, where $x_1,\dots,x_k\in D$, $k\in\N$, $u\in L^q(D,\R^N)$ is a measurable function and $\delta_v$ is the Dirac measure centered at $v\in\R^N$. The symmetry and consistency conditions \textit{(ii)}, \textit{(iii)} follow immediately, and the $L^q$ integrability condition \textit{(iv)} asserts that $u \in L^q(D,\R^N)$. Moreover, the modulus of continuity $d_\eps^q$ in condition \textit{(v)} is
\[
d_\eps^q(\nu^2) = \left(\int_D \intavg_{B_\eps(x)} |u(x)-u(y)|^q\,dydx\right)^{1/q},
\]
which vanishes as $\eps\to0$ due to the Lebesgue differentiation theorem. Thus, diagonal continuity is the assertion that the Lebesgue differentiation theorem---which automatically holds for $L^q$ functions---also holds for $\cm$. Correlation measures which are concentrated on a single function $u$ are called \textit{atomic}.
\end{remark}

\begin{definition}\label{def:corrmeas}
Let $d,N\in\N$ and let $D\subset\R^d$ be an open set (the ``space domain''), let $\timedom\subset\R$ be an interval (the ``time domain'') and denote $\phase = \R^N$ (``phase space''). A \textit{time-dependent correlation measure from $\timedom\times D$ to $\phase$} is a collection $\cm = (\nu^1,\nu^2,\dots)$ of maps satisfying:
\begin{enumerate}[label=\it(\roman*)]
\item $\nu^k$ is a Young measure from $\timedom\times D^k$ to $\phase^k$.
\item \textit{Symmetry:} if $\sigma$ is a permutation of $\{1,\dots,k\}$ and $f\in C_0(\phase^k)$ then $\ip{\nu^k_{t,\sigma(x)}}{f(\sigma(\xi))} = \ip{\nu^k_{t,x}}{f(\xi)}$ for a.e.\ $(t,x)\in \timedom\times D^k$. 
\item \textit{Consistency:} If $f\in C_b(\phase^k)$ is of the form $f(\xi_1,\dots,\xi_k) = g(\xi_1,\dots,\xi_{k-1})$ for some $g\in C_0(\phase^{k-1})$, then $\ip{\nu^k_{t,x_1,\dots,x_k}}{f} = \ip{\nu^{k-1}_{t,x_1,\dots,x_{k-1}}}{g}$ for almost every $(t,x_1,\dots,x_k)\in \timedom\times D^k$.
\item \textit{$L^q$ integrability:} There is a $c>0$ such that
\begin{equation}\label{eq:corrlpbound}
\int_D \ip{\nu^1_{t,x}}{|v|^q}\,dx \leq c \qquad \text{for a.e.\ } t\in\timedom.
\end{equation}
\item\textit{Diagonal continuity (DC):} $\lim_{\eps\to0}d_\eps^q(\nu^2) = 0$, where \begin{equation}\label{eq:defd_epstime}
d_\eps^q(\nu^2) := \left(\int_0^T\int_D \intavg_{B_\eps(x)} \ip{\nu^2_{t,x,y}}{|\xi_1-\xi_2|^q}\,dydxdt\right)^{1/q}.
\end{equation}
\end{enumerate}
\end{definition}

\subsection{Statistical solutions}\label{sec:statsolndef}
For the following definition, recall that the natural framework to study the local energy (in)equality for the incompressible Euler equations is $v\in L^3_{t,x}$, $p\in L^{3/2}_{t,x}$. Therefore, we need to distinguish the integrability conditions corresponding to the velocity and the pressure, respectively, which leads to the ``mixed" integrability and diagonal continuity conditions \eqref{eq:L3} and \eqref{eq:DCmix} below. The integration variable of $\nu^k_{t,x}$ will be denoted $\xi = (v^1,\dots,v^d,q) \in \phase^k$, with the interpretation of $v^i=(v^i_1,\dots,v^i_k)$ as the $i$-th component of velocity and of $p=(p_1,\dots,p_k)$ as the scalar pressure at $k$ different spatial points $x=(x_1,\dots,x_k)$. Here, our phase space is $\phase=\R^{d+1}$.

\begin{definition}
Let $D\subset\R^d$ be a spatial domain and let $T>0$. Let $\bar{\cm}\in\Lp^2(D,\R^d)$ be given initial data. By a \textit{statistical solution} of the incompressible Euler equations, we will mean a time-dependent correlation measure $\cm$ from $[0,T]\times D$ to $\R^{d+1}$, where the integrability condition \eqref{eq:corrlpbound} is to be understood as 
\begin{equation}\label{eq:L3}
\begin{cases}
\displaystyle\int_D\ip{\nu^1_{t,x}}{|v|^2}\,dx \leq c & \text{for a.e.\ } t\in[0,T] \\
\displaystyle\int_0^T\int_D\ip{\nu^1_{t,x}}{|v|^3+|p|^{3/2}}\,dxdt<\infty
\end{cases}
\end{equation}\label{eq:DCmix}
for some $c>0$, and diagonal continuity is to be understood as
\begin{equation}
\lim_{\eps\to0}\int_0^T\int_D \intavg_{B_\eps(x)} \ip{\nu^2_{t,x,y}}{|v_1-v_2|^2+|v_1-v_2|^3+|p_1-p_2|^{3/2}}\,dydxdt=0,
\end{equation}
such that:
\begin{enumerate}[label=\it(\roman*)]
\item For all $k\in\N$, $\nu^k = \nu^k_{t,x_1,\dots,x_k}$ satisfies
\begin{equation}\label{eq:MVeulerk}
\begin{split}
\int_0^T\int_{D^k}\ip{\nu^k}{v^{i_1}_1\cdots v^{i_k}_k}\partial_t\phi + \sum_{l=1}^k\ip{\nu^k}{v_l v^{i_1}_1\cdots v^{i_k}_k}\cdot \nabla_{x_l}\phi \\
+ \sum_{l=1}^k\ip{\nu^k}{v_1^{i_1}\cdots p_l \cdots v_k^{i_k}}\frac{\partial\phi}{\partial x_l^{i_l}}\,dxdt  = 0
\end{split}
\end{equation} 
for all $i_1,\dots,i_k=1,\dots,d$ and for all $\phi\in C_c^\infty\big((0,T)\times D^k\big)$. (Here we abbreviate $v_1^{i_1}\cdots p_l \cdots v_k^{i_k} = v_1^{i_1}\cdots v_{l-1}^{i_{l-1}} p_l v_{l+1}^{i_{l+1}} \cdots v_k^{i_k}$, the $l$th component of $v$ being omitted.)
\item $\cm$ is divergence-free: For every $\phi\in C_c^\infty(\R_+\times D^k)$ and every $\kappa\in C(\phase^{k-1})$ for which $\ip{\nu^{k-1}}{|\kappa|} < \infty$, we have
\begin{equation}\label{eq:divfree}
\int_{0}^T\int_{D^k}\nabla_{x_k}\phi(x_k)\cdot \ip{\nu^k_{t,x}}{\kappa(v_1,\dots,v_{k-1}) v_k}\,dxdt = 0.
\end{equation}
\end{enumerate}
\end{definition}

\begin{remark}
The first two instances of \eqref{eq:MVeulerk} are:
\begin{equation}\label{eq:aveuler}
\begin{split}
\int_0^T\int_D \ip{\nu^1}{v^i}\partial_t\phi + \ip{\nu^1}{v^iv}\cdot\nabla\phi + \ip{\nu^1}{p}\partial_{x^i}\phi\,dxdt  = 0
\end{split}
\end{equation}
for all $\phi\in C_c^\infty\big((0,T)\times D\big)$ and all $i=1,\dots,k$, and
\begin{equation}\label{eq:MVeuler2}
\begin{split}
\int_0^T\int_D\int_D\ip{\nu^2}{v^i_1v^j_2}\partial_t\phi + \ip{\nu^2}{v^i_1v^j_2v_1}\cdot \nabla_x\phi + \ip{\nu^2}{v^i_1v^j_2v_2}\cdot \nabla_y \phi \\
+ \ip{\nu^2}{v^j_2p_1}\partial_{x^i}\phi + \ip{\nu^2}{v^i_1p_2}\partial_{y^j} \phi\,dxdydt  = 0
\end{split}
\end{equation} 
for all $\phi\in C_c^\infty\big((0,T)\times D^2\big)$ and all $i,j=1,\dots,d$. These are direct analogues of \eqref{eq:weakdef} and \eqref{eq:weakeuler2ndmoment}, respectively, and are the only instances of \eqref{eq:MVeulerk} which will be used in the remainder.
\end{remark}

\begin{remark}
By \eqref{eq:L3}, all integrals in \eqref{eq:aveuler} and \eqref{eq:MVeuler2} are well-defined. 
\end{remark}

\subsection{Energy conservation for statistical solutions}\label{sec:onsagerstatsoln}
We are now ready to prove the ``energy conservation'' part of Onsager's conjecture for statistical solutions. In the same vein as Duchon and Robert \cite{DR00}, we will prove a somewhat stronger result by quantifying the precise energy dissipation $\mathcal{E}(u)$, and prescribing a sufficient condition that ensures that $\mathcal{E}(u)\equiv0$.
\begin{theorem}\label{thm:statsolnenergycons}
Let $\cm$ be a statistical solution of the incompressible Euler equations on $[0,T]\times D$, where either $D=\mathbb{T}^d$ or $D = \R^d$. Let $\rho_\eps(z)=\eps^{-d}\rho(z/\eps)$ be a rotationally symmetric mollifier. Then the distribution $\mathcal{E}(\cm)\in\mathcal{D}'(D)$ given by
\[
\mathcal{E}(\cm)(\psi) := -\frac{1}{2}\lim_{\eps\to0} \int_D\int_{B_\eps(x)} \psi(x)\nabla\rho_\eps(z)\cdot\ip{\nu^2_{x,x-z}}{(v_1-v_2)|v_1-v_2|^2}\,dzdx
\]
is well-defined and independent of the choice of $\rho$, and $\cm$ satisfies
\begin{equation}\label{eq:statsolnenergycons}
\partial_t\ip{\nu^1_{t,x}}{|v|^2} + \sum_{k=1}^d\partial_{x^k}\ip{\nu^1_{t,x}}{|v|^2v^k+2v^kp}=\mathcal{E}(\cm)
\end{equation}
in the sense of distributions. If $\cm$ satisfies the regularity condition
\begin{equation}\label{eq:nubesov}
\liminf_{\eps\to0}\frac{1}{\eps}\int_0^T\int_D\intavg_{B_\eps(x)} \ip{\nu^2_{t,x,y}}{|v_1-v_2|^3}\,dydxdt = 0
\end{equation}
then $\mathcal{E}(\cm)\equiv0$.
\end{theorem}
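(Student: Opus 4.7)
The strategy is to mimic the weak-solution proof of Section~\ref{sec:onsagerweaksoln} almost verbatim, using the second-moment identity \eqref{eq:MVeuler2} in place of \eqref{eq:weakeuler2ndmoment_2}. Setting $j=i$ in \eqref{eq:MVeuler2} and summing over $i=1,\dots,d$ produces an equation for $\ip{\nu^2_{t,x,y}}{v_1\cdot v_2}$; I then test against $\phi(t,x,y)=\rho_\eps(x-y)\psi(t,x)$ for an arbitrary $\psi\in C_c^\infty((0,T)\times D)$ and perform the change of variables $z=x-y$. The resulting identity decomposes into five terms $\mathcal{A}_1(\eps),\dots,\mathcal{A}_5(\eps)$ in direct analogy with \eqref{eq:weakeuler2ndmoment_4}, each function-valued integrand replaced by the corresponding correlation-measure pairing.

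\textbf{Passing to the limit.} For the ``bulk'' terms $\mathcal{A}_1$, $\mathcal{A}_3$, $\mathcal{A}_5$ I would combine the $\rho_\eps$-averaging with diagonal continuity \eqref{eq:DCmix} and consistency to replace $\ip{\nu^2_{t,x,x-z}}{F(v_1,v_2,p_1,p_2)}$ by $\ip{\nu^1_{t,x}}{F(v,v,p,p)}$ in the limit $\eps\to 0$; this gives $\mathcal{A}_1\to\int\ip{\nu^1}{|v|^2}\partial_t\psi$ and $\mathcal{A}_3+\mathcal{A}_5\to\int\ip{\nu^1}{(|v|^2+p)v}\cdot\nabla\psi$. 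The term $\mathcal{A}_2$ splits as $\mathcal{A}_{2,1}+\mathcal{A}_{2,2}$, where $\mathcal{A}_{2,1}$ is precisely the expression whose limit defines $\mathcal{E}(\cm)(\psi)$, while $\mathcal{A}_{2,2}$ is symmetrized via $z\mapsto-z$ (using the symmetry of $\nu^2$) and then rewritten with the divergence-free condition \eqref{eq:divfree} applied with $\kappa(v_1)=|v_1|^2$; the residue vanishes by diagonal continuity. The pressure term $\mathcal{A}_4$ is treated analogously: one sub-term is killed directly by \eqref{eq:divfree} with $\kappa(v_1,p_1)=p_1$, and a change of variables together with a second application of \eqref{eq:divfree} reduces the other to an expression converging to $\int\ip{\nu^1}{pv}\cdot\nabla\psi$. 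Summing these limits delivers \eqref{eq:statsolnenergycons}. Since every $\mathcal{A}_j$ other than $\mathcal{A}_{2,1}$ has a limit manifestly independent of $\rho$, the identity $\sum_j\mathcal{A}_j=0$ forces $\mathcal{A}_{2,1}(\eps)$ to converge as well, with a $\rho$-independent limit, which establishes both the well-definedness and the mollifier-independence of $\mathcal{E}(\cm)$. Finally, under \eqref{eq:nubesov}, the elementary bound $\|\nabla\rho_\eps\|_{L^1}\leq C\eps^{-1}$ applied directly to $\mathcal{A}_{2,1}(\eps)$ yields $\mathcal{E}(\cm)\equiv 0$.

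\textbf{Main obstacle.} The delicate step is the correct use of the divergence-free condition \eqref{eq:divfree}, since in the statistical setting one cannot simply write ``$\nabla\cdot v=0$ at $y$'' under an integral. Each invocation of the divergence constraint from the weak-solution proof must instead be recast as a single application of \eqref{eq:divfree} with a suitably chosen $\kappa$ ($\kappa=|v_1|^2$ for $\mathcal{A}_{2,2}$, $\kappa=p_1$ for $\mathcal{A}_4$), and the integrability hypothesis $\ip{\nu^{k-1}}{|\kappa|}<\infty$ must be verified against \eqref{eq:L3}. A secondary concern is that the reinforced diagonal continuity \eqref{eq:DCmix} -- which controls quadratic and cubic velocity moments and the $3/2$-moment of pressure differences -- must be strong enough to drive each of the limit passages uniformly in $\eps$; this is precisely why those mixed conditions were built into the definition of a statistical solution.
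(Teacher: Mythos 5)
Your proposal follows essentially the same route as the paper's proof: the same test function $\phi=\rho_\eps(x-y)\psi(t,x)$ in the diagonal trace of \eqref{eq:MVeuler2}, the same five-term decomposition with $\mathcal{A}_{2,1}$ identified as $\mathcal{E}(\cm)(\psi)$, the same applications of symmetry/consistency, diagonal continuity, and the divergence constraint \eqref{eq:divfree} (with $\kappa=|v_1|^2$ and $\kappa=p_1$), and the same argument that convergence of the remaining terms to $\rho$-independent limits yields well-definedness and mollifier-independence, with \eqref{eq:nubesov} killing $\mathcal{A}_{2,1}$ along a subsequence. The plan is correct as it stands.
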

\begin{remark}
The left-hand side of \eqref{eq:nubesov} equals $\frac{1}{\eps}d_\eps^3(\nu^2)^3$, cf.~\eqref{eq:defd_epstime}. Whereas the requirement of diagonal continuity merely requires that $d_\eps^3(\nu^2)$ vanishes as $\eps\to0$, the regularity assumption \eqref{eq:nubesov} imposes a rate at which it vanishes.
\end{remark}
\begin{remark}\label{rem:kolmogorov}
In turbulence theory, the Kolmogorov four-fifths law states that in a homogeneous (but not necessarily isotropic) turbulent flow, the left-hand side of \eqref{eq:statsolnenergycons} equals
\[
\frac{1}{2}\nabla_z\cdot \big\langle|\delta v(x,z)|^2\delta v(x,z)\big\rangle\Big|_{z=0}, \qquad \delta v(x,z) := v(x)-v(x-z),
\]
where the angle brackets denote the expected value over an ensemble of turbulent flows; cf.~\cite[Section 6.2.5]{Frisch}. It is readily seen that $\mathcal{E}(\cm)$ is a distributional version of the above quantity. {(See also \cite[Section 5]{DR00}.)}
\end{remark}
\begin{proof}
\newcommand{\term}{\mathcal{A}}
Set $j=i$ in \eqref{eq:MVeuler2} and sum over $i$:
\begin{equation}\label{eq:MVeuler2ndmoment_2}
\begin{split}
\sum_i\int_0^T\int_D\int_D\ip{\nu^2_{x,y}}{v^i_1v^i_2}\partial_t\phi + \ip{\nu^2_{x,y}}{v^i_1v^i_2v_1}\cdot \nabla_x\phi + \ip{\nu^2_{x,y}}{v^i_1v^i_2v_2}\cdot \nabla_y \phi \\
+ \ip{\nu^2_{x,y}}{v^i_2p_1}\partial_{x^i}\phi + \ip{\nu^2_{x,y}}{v^i_1p_2}\partial_{y^i} \phi\,dxdydt = 0
\end{split}
\end{equation}
(where we suppress the dependence on $t$). Fix a number $\eps>0$. We choose again the test function $\phi(t,x,y) = \rho_\eps(x-y)\psi(t,x)$, where $\rho_\eps(z) = \eps^{-d}\rho(\eps^{-1}z)$ for a nonnegative, rotationally symmetric mollifier $\rho\in C_c^\infty(D)$ with unit mass and support in $B_0(1)$ and $\psi\in C_c^\infty((0,T)\times D)$. Then as before,
\[
\partial_t\phi = \rho_\eps\partial_t\psi, \qquad \nabla_x \phi = \psi\nabla\rho_\eps+\rho_\eps\nabla\psi, \qquad \nabla_y \phi = -\psi\nabla\rho_\eps.
\]
Continuing from \eqref{eq:MVeuler2ndmoment_2}, we now have
\begin{align*}
\sum_i\int_0^T\int_D\int_D \ip{\nu^2_{x,y}}{v^i_1v^i_2}\rho_\eps\partial_t\psi + \ip{\nu^2_{x,y}}{v^i_1v^i_2(v_1-v_2)}\cdot\nabla\rho_\eps\psi \\
+ \ip{\nu^2_{x,y}}{v^i_1v^i_2v_1}\cdot \rho_\eps\nabla\psi\\
+ \ip{\nu^2_{x,y}}{v_2p_1 - v_1p_2}\cdot \nabla\rho_\eps\psi
+  \ip{\nu^2_{x,y}}{v_2p_1 }\cdot \rho_\eps\nabla\psi\,dxdydt= 0.
\end{align*}
Making the change of variables $z =x-y$ yields
\begin{equation}\label{eq:MVeuler2ndmoment_4}
\begin{split}
\int_0^T \int_D\int_D\partial_t\psi(t,x)\rho_\eps(z)\ip{\nu^2_{x,x-z}}{v_1\cdot v_2}\,dxdzdt \\
+\int_0^T \int_D\int_D\psi(t,x)\nabla\rho_\eps(z)\cdot \ip{\nu^2_{x,x-z}}{(v_1-v_2)(v_1\cdot v_2)}\,dxdzdt \\
+ \int_0^T \int_D\int_D\nabla\psi(t,x)\rho_\eps(z)\cdot \ip{\nu^2_{x,x-z}}{v_1(v_1\cdot v_2)}\,dxdzdt \\
+\int_0^T\int_D\int_D\psi(t,x)\nabla\rho_\eps(z)\cdot \ip{\nu^2_{x,x-z}}{v_2p_1 - v_1p_2}\,dxdzdt \\
+ \int_0^T\int_D\int_D\nabla\psi(t,x)\rho_\eps(z)\cdot \ip{\nu^2_{x,x-z}}{v_2p_1}\,dxdzdt = 0
\end{split}
\end{equation}
Decompose the above into a sum of five terms $\term_1+\dots+\term_5$. We consider each in order.

For $\term_1$ we can write $\term_1 = \term_{1,1}+\term_{1,2}$, where
\begin{align*}
\term_{1,1} &=\frac{1}{2}\int_0^T\int_D\int_D\partial_t\psi(t,x)\rho_\eps(z) \ip{\nu^2_{x,x-z}}{|v_1|^2+|v_2|^2}\,dxdzdt, \\
\term_{1,2} &= -\frac{1}{2}\int_0^T\int_D\int_D\partial_t\psi(t,x)\rho_\eps(z) \ip{\nu^2_{x,x-z}}{|v_1-v_2|^2} \,dxdzdt.
\end{align*}
The first term $\term_{1,1}$ converges as $\eps\to0$ to $\int_0^T\int_D\partial_t\psi(t,x) \ip{\nu^1_{t,x}}{|v|^2}\,dxdt$. Indeed,

\begin{align*}
\term_{1,1} &= \frac{1}{2}\int_0^T\int_D\int_D\partial_t\psi(t,x)\rho_\eps(z)\left( \ip{\nu^2_{x,x-z}}{|v_1|^2} + \ip{\nu^2_{x,x-z}}{|v_2|^2}\right) \,dxdzdt \\
&= \frac{1}{2}\int_0^T\int_D\int_D\partial_t\psi(t,x)\rho_\eps(z)\ip{\nu^1_{x}}{|v|^2}\,dxdzdt \\
&\quad+ \frac{1}{2}\int_0^T\int_D\int_D\partial_t\psi(t,x+z)\rho_\eps(z)\ip{\nu^2_{x+z,x}}{|v_2|^2}\,dxdzdt \\
&= \int_0^T\int_D\int_D\frac{1}{2}\left(\partial_t\psi(x)+\partial_t\psi(x+z)\right)\rho_\eps(z) \ip{\nu^1_{x}}{|v|^2}\,dxdzdt, 
\end{align*}
where we have changed variables $x \mapsto x+z$ in the second equality and used the consistency and symmetry properties of $\nu^2$ in the third equality. Letting $\eps\to0$ we obtain $\term_{1,1}\to\int_0^T\int_D\partial_t\psi(t,x) \ip{\nu^1_{t,x}}{|v|^2}\,dxdt$.

The second term $\term_{1,2}$ converges to zero as $\epsilon\to 0$ by diagonal continuity. Thus,
\[
\lim_{\eps\to0} \term_{1}=\int_0^T\int_D\partial_t\psi(t,x) \ip{\nu^1_{t,x}}{|v|^2}\,dxdt.
\]

For $\term_2$ we can then write $\term_2 = \term_{2,1} + \term_{2,2}$, where
\begin{align*}
\term_{2,1} &= -\frac{1}{2}\int_0^T \int_D\int_D\psi(t,x)\nabla\rho_\eps(z)\cdot \ip{\nu^2_{x,x-z}}{(v_1-v_2)|v_1-v_2|^2} \,dxdzdt, \\
\term_{2,2} &= \frac{1}{2}\int_0^T \int_D\int_D\psi(t,x)\nabla\rho_\eps(z)\cdot\ip{\nu^2_{x,x-z}}{(v_1-v_2)\big(|v_1|^2+|v_2|^2\big)}\,dxdzdt.
\end{align*}
{In the limit $\eps\to0$, the first term is precisely $\mathcal{E}(\cm)(\psi)$. (This limit is well-defined because all the remaining terms converge as $\eps\to0$.) Under the regularity assumption \eqref{eq:nubesov} this term} can be bounded as
\begin{align*}
|\term_{2,1}| &\leq \frac{\|\psi\|_\infty}{2}\int_0^T\int_D\int_D |\nabla\rho_\eps(z)| \ip{\nu^2_{x,x-z}}{|v_1-v_2|^3} \, dxdzdt\\
&\leq \frac{C\|\psi\|_\infty}{\eps}\int_0^T \int_D\intavg_{B_\eps(0)} \ip{\nu^2_{x,x-z}}{|v_1-v_2|^3} \, dzdxdt \\
&\to 0,
\end{align*}
as $\eps\to0$ (after choosing a suitable subsequence if necessary), where we also used the fact that $\|\nabla\rho_\eps\|_{L^\infty(\R^d)} \leq C\eps^{-1-d}$.

For the second term $\term_{2,2}$ we use the transformation $x\mapsto x+z$ in the term with $|v_2|^2$ and then the transformation $z\mapsto -z$ and thus get
\begin{align*}
\term_{2,2}=\frac{1}{2}\int_0^T \int_D\int_D\nabla\rho_\eps(z)\left(\psi(x)+\psi(x-z)\right)\cdot\ip{\nu^2_{x,x-z}}{|v_1|^2(v_1-v_2)}\,dxdzdt.
\end{align*} 
But now the divergence constraint \eqref{eq:divfree} implies
\begin{align*}
\term_{2,2}=\frac{1}{2}\int_0^T \int_D\int_D\rho_\eps(z)\nabla\psi(x-z)\cdot\ip{\nu^2_{x,x-z}}{|v_1|^2(v_1-v_2)}\,dxdzdt,
\end{align*}
and then the diagonal continuity condition implies $\lim_{\eps\to 0}\term_{2,2}=0$.

Using an argument similar to the treatment of $\term_1$, it is not hard to see that diagonal continuity implies 
\begin{equation*}
\term_3\to \int_0^T\int_D\ip{\nu^1_{t,x}}{|v|^2v}\cdot\nabla\psi(t,x)dxdt
\end{equation*}
as $\eps\to0$, and similarly
\begin{equation*}
\term_5\to \int_0^T\int_D\ip{\nu^1_{t,x}}{pv}\cdot\nabla\psi(t,x)dxdt.
\end{equation*}

Finally, for $\term_4$ we translate $x\mapsto x+z$ in the second term and obtain
\begin{equation*}
\term_4=\int_0^T\int_D\int_D\left[\psi(x)\nabla\rho_\eps(z)\cdot\ip{\nu^2_{x,x-z}}{p_1v_2}-\psi(x+z)\nabla\rho_\eps(z)\cdot\ip{\nu^2_{x,x+z}}{p_1v_2}\right]dxdzdt.
\end{equation*} 
Owing to the divergence condition, the first term is zero, whereas the second term (again invoking the divergence constraint) equals
\begin{equation*}
\term_4=\int_0^T\int_D\int_D\nabla\psi(x+z)\rho_\eps(z)\cdot\ip{\nu^2_{x,x+z}}{p_1v_2}dxdzdt.
\end{equation*} 
Once more invoking diagonal continuity yields
\begin{equation*}
\lim_{\eps\to0}\term_4=\int_0^T\int_D\nabla\psi(x)\cdot\ip{\nu^1_{x}}{pv}dxdt.
\end{equation*} 
Collecting all terms now gives the desired result.

\end{proof}

\section{Probabilistic versus deterministic regularity}\label{sec:regularity}

In this section we will compare the regularity of functions and of correlation measures, and we will show that if a correlation measure is concentrated on a family of $L^2$ functions (soon to be made precise), then this family is at least as regular as the correlation measure (also to be made precise). We will use the notation
\[
d_\eps^q(v) := \left(\int_D\intavg_{B_\eps(x)} \big|v(x)-v(y)\big|^q\,dydx\right)^{1/q}
\]
for a function $v: D \to \R^d$, and for a correlation measure $\cm = (\nu^1,\nu^2,\dots)$ we write
\[
d_\eps^q(\nu^2) := \left(\int_D\intavg_{B_\eps(x)} \ip{\nu^2_{x,y}}{\big|v_1-v_2\big|^q}\,dydx\right)^{1/q}.
\]
(For notational convenience we only look at space-dependent functions in this section.)

In \cite{FLM17} the authors proved that the set of correlation measures, as defined in Definition \ref{def:corrmeas}, are equivalent to the set $\Prob(L^2(D))$ of probability measures on $L^2(D)$. We make this duality more precise in the following definition:

\begin{definition}
A probability measure $\mu \in \Prob(L^2(D;\phase))$ is said to be \define{dual} to a correlation measure $\cm$ from $D$ to $\phase$ provided
\begin{equation}
\int_{D^k} \ip{\nu^k_{x}}{g(x,\cdot)}\, dx = \int_{L^2}\int_{D^k} g(x,v(x_1),\dots,v(x_k))\,dxd\mu(v)
\end{equation}
for every $k\in\N$ and for every Caratheodory function $g : D^k \to C(\phase^k)$.
\end{definition}

\begin{proposition}
Let $\mu \in \Prob(L^2(D;\phase))$ be dual to a space-time correlation measure $\cm$, and assume that $\cm$ is $\alpha$-Besov regular in the sense that
\begin{equation}\label{eq:nubesovthm}
\liminf_{\eps\to0}\frac{d_\eps^q(\nu^2)}{\eps^\alpha} = 0.
\end{equation}
Then
\begin{equation}\label{eq:vbesovthm}
\liminf_{\eps\to0}\frac{d_\eps^q(v)}{\eps^\alpha} = 0
\end{equation}
for $\mu$-almost every $v\in L^2(D;\phase)$. Conversely, if there is a common subsequence $\eps_n\to0$ such that 
\begin{equation}\label{eq:vbesovthm2}
\lim_{n\to\infty}\frac{d_{\eps_n}^q(v)}{\eps_n^\alpha} = 0 \qquad \text{boundedly }
\end{equation}
for $\mu$-almost every $v\in L^2(D;\phase)$, then \eqref{eq:nubesovthm} holds. 
\end{proposition}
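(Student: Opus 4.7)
The plan is to reduce everything to a single identity expressing $d_\eps^q(\nu^2)^q$ as an average of $d_\eps^q(v)^q$ against $\mu$, and then apply, in the two directions, Fatou's lemma and bounded convergence.

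First, I would apply the duality relation with $k=2$ to the Caratheodory function
\[
g(x_1,x_2,\xi_1,\xi_2) \;=\; \frac{\mathbbm{1}_{B_\eps(x_1)}(x_2)}{|B_\eps(x_1)|}\,|\xi_1-\xi_2|^q
\]
(after truncating $|\xi_1-\xi_2|^q$ at some level $M$ and letting $M\to\infty$, using the $L^q$-integrability of $\nu^1$ together with Remark~\ref{rem:depsbounded} to justify passage to the limit). This yields the key identity
\begin{equation}\label{eq:keyid}
d_\eps^q(\nu^2)^q \;=\; \int_{L^2} d_\eps^q(v)^q \, d\mu(v).
\end{equation}
Dividing by $\eps^{q\alpha}$ rewrites the assumption \eqref{eq:nubesovthm} as a statement about the $\mu$-average of $d_\eps^q(v)^q/\eps^{q\alpha}$.

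For the forward direction, choose by hypothesis a sequence $\eps_n\to 0$ with $d_{\eps_n}^q(\nu^2)/\eps_n^{\alpha}\to 0$. Applying Fatou's lemma to \eqref{eq:keyid} gives
\[
\int_{L^2}\liminf_{n\to\infty}\frac{d_{\eps_n}^q(v)^q}{\eps_n^{q\alpha}}\,d\mu(v) \;\leq\; \liminf_{n\to\infty}\frac{d_{\eps_n}^q(\nu^2)^q}{\eps_n^{q\alpha}}\;=\;0,
\]
so that $\liminf_{n\to\infty} d_{\eps_n}^q(v)/\eps_n^\alpha = 0$ for $\mu$-a.e.\ $v$. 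This forces $\liminf_{\eps\to 0} d_\eps^q(v)/\eps^\alpha = 0$ for $\mu$-a.e.\ $v$, establishing \eqref{eq:vbesovthm}.

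For the converse, the hypothesis \eqref{eq:vbesovthm2} provides a common sequence $\eps_n\to 0$ along which $d_{\eps_n}^q(v)/\eps_n^\alpha\to 0$ for $\mu$-a.e.\ $v$, and the word \emph{boundedly} gives a uniform-in-$n$ $\mu$-a.e.\ majorant (a constant, which is $\mu$-integrable since $\mu$ is a probability measure). The dominated convergence theorem, applied to \eqref{eq:keyid} divided by $\eps_n^{q\alpha}$, yields
\[
\lim_{n\to\infty}\frac{d_{\eps_n}^q(\nu^2)^q}{\eps_n^{q\alpha}} \;=\; \int_{L^2}\lim_{n\to\infty}\frac{d_{\eps_n}^q(v)^q}{\eps_n^{q\alpha}}\,d\mu(v) \;=\; 0,
\]
which implies \eqref{eq:nubesovthm}.

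The only mild obstacle is the justification of using the unbounded test function $|\xi_1-\xi_2|^q$ in the duality pairing; this is handled by truncation and monotone convergence, relying on the $L^q$-integrability built into the definition of a correlation measure. After that, both implications are essentially immediate consequences of \eqref{eq:keyid}.
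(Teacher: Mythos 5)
Your proposal is correct and follows essentially the same route as the paper: the duality identity expressing $d_\eps^q(\nu^2)^q$ as the $\mu$-average of $d_\eps^q(v)^q$, then Fatou's lemma for the forward direction and dominated convergence (using the uniform bound encoded in ``boundedly'') for the converse. Your extra truncation step to justify pairing the duality with the unbounded integrand $|\xi_1-\xi_2|^q$ is a reasonable refinement that the paper leaves implicit, but it does not change the argument.
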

\begin{proof}
Using the duality between $\mu$ and $\nu$ and Fatou's lemma yields
\begin{align*}
0 &= \liminf_{\eps\to0} \frac{1}{\eps^{\alpha q}} \int_D\intavg_{B_{\eps}(x)} \ip{\nu^2_{x,y}}{\big|v_1-v_2\big|^q}\,dydx \\
&= \liminf_{\eps\to0} \frac{1}{\eps^{\alpha q}} \int_{L^2}\int_D\intavg_{B_{\eps}(x)} |v(x)-v(y)\big|^q \,dydxd\mu(u) \\
&\geq \int_{L^2}\liminf_{\eps\to0} \frac{1}{\eps^{\alpha q}}\int_D\intavg_{B_{\eps}(x)} |v(x)-v(y)\big|^q \,dydxd\mu(u) \\
&= \int_{L^2}\liminf_{\eps\to0} \frac{d_{\eps}^q(u)^q}{\eps^{\alpha q}} \,d\mu(u).
\end{align*}
The conclusion follows immediately. Conversely, if \eqref{eq:vbesovthm2} holds then the dominated convergence theorem implies that \eqref{eq:nubesovthm} holds along the prescribed subsequence $\eps_n$.
\end{proof}

\section*{Acknowledgments}
U.S.F.\ was supported in part by the grant \textit{Waves and Nonlinear Phenomena} (WaNP) from the Research Council of Norway.


\end{document}